\theoremstyle{plain}
\newtheorem{lem}{Lemma}[section]
\newtheorem{cor}[lem]{Corollary}
\newtheorem{prop}[lem]{Proposition}
\newtheorem{thm}[lem]{Theorem}
\newtheorem*{mthm*}{Main Theorem}
\theoremstyle{definition}
\newtheorem{defn}[lem]{Definition}
\newtheorem{ex}[lem]{Example}
\newtheorem{para}[lem]{}
\newtheorem{convention}[lem]{Convention}
\newtheorem*{convention*}{Convention}
\newcommand{\id}{\operatorname{id}}
\newcommand{\Ext}{\operatorname{Ext}}	
\newcommand{\Tor}{\operatorname{Tor}}
\newcommand{\HH}{\operatorname{H}}
\newcommand{\End}{\operatorname{End}}
\newcommand{\Ker}{\operatorname{Ker}}
\newcommand{\bbz}{\mathbb{Z}}
\newcommand{\xra}{\xrightarrow}
\newcommand{\xla}{\xleftarrow}
\renewcommand{\geq}{\geqslant}
\renewcommand{\leq}{\leqslant}
\def\Tor{\operatorname{Tor}}
\def\Ext{\operatorname{Ext}}
\def\m{\mathfrak{m}}
\def\fm{\mathfrak{m}}
\def\spec{\operatorname{Spec}}
\def\End{\mathrm{End}}
\numberwithin{equation}{lem}
\begin{document}

\bibliographystyle{amsplain}

\title[Semi-fiber products of algebras and lifting of complexes]{Semi-fiber products of algebras\\ and lifting of complexes}

\author{Saeed Nasseh}
\address{Department of Mathematical Sciences\\
Georgia Southern University\\
Statesboro, GA 30460, U.S.A.}
\email{snasseh@georgiasouthern.edu}

\author{Maiko Ono}
\address{Department of Mathematics, Okayama University, 3-1-1 Tsushima-naka, Kita-ku, Okayama 700-8530, Japan}
\email{onomaiko.math@okayama-u.ac.jp}

\author{Yuji Yoshino}
\address{Graduate School of Environmental, Life, Natural Science and Technology, Okayama University, Okayama 700-8530, Japan}
\email{yoshino@math.okayama-u.ac.jp}

\thanks{M. Ono was partly supported by the Wesco Scientific Promotion Foundation and Y. Yoshino was supported by JSPS Kakenhi Grant 19K03448 and 24K0669.}

%\dedicatory{}

\keywords{Fiber product, lifting, Poincar\'{e} series, rational point, retraction, sections, semi-fiber product, tensor algebra, trivial extension}
\subjclass[2020]{13B35, 13D02, 13E05}

\begin{abstract}
Let $k$ be a field. In this paper, we define the notion of semi-fiber products of commutative $k$-algebras and show that the class of such rings contains several classes of commutative rings, including that of the fiber products of local $k$-algebras over their common residue field $k$. For a noetherian local $k$-algebra $R$ and an ideal $I$ of $R$, under certain conditions, we characterize the liftability of $k$ along the natural surjection $R\twoheadrightarrow R/I$ in terms of retractions, sections, and the existence of semi-fiber product decompositions of $R$.  
\end{abstract}

\maketitle

%\tableofcontents

\section{Introduction}\label{sec20231210a}

\begin{convention}\label{para20250907a}
Throughout the paper, all rings are commutative with unity and $k$ is a field. By the notation $(R,\fm_R)$ we mean a (not necessarily local or noetherian) $k$-algebra $R$ with a specified maximal ideal $\m_R$ for which the composition map $k \hookrightarrow  R \twoheadrightarrow  R/\m_R$ is an isomorphism, i.e., $\fm_R$ is a $k$-rational point of $\spec(R)$. In this situation, we simply refer to $(R,\fm_R)$ as a ``$k$-algebra''. When we say that a $k$-algebra $(R,\fm_R)$ is complete, we mean it is complete in the $\fm_R$-adic topology; in this case, note that $(R,\frak m_R)$ is a complete local ring. Also, for a $k$-algebra homomorphism $f\colon (R,\fm_R)\to (S,\fm_S)$ we always assume $f(\m_R) \subseteq \m_S$. When there is no fear of confusion, we may simply write $f\colon R\to S$ is a $k$-algebra homomorphism without indicating the $k$-rational points $\fm_R$ and $\fm_S$.
\end{convention}

Typical examples of $k$-algebras of particular interest include complete local rings with coefficient field $k$ and affine rings over the field $k$ with specified $k$-rational points. In this paper, we combine three topics: (1) semi-fiber products of $k$-algebras; (2) lifting theory of modules and complexes; and (3) the existence of sections and retractions for certain $k$-algebra homomorphisms, as we describe subsequently.\vspace{2mm}

In Section~\ref{section20240729a}, we define the notion of semi-fiber products of $k$-algebras, which are commutative rings with unity. We show that the class of such rings encompasses fiber product rings of local $k$-algebras over their common residue field $k$, trivial extensions of $k$-algebras by their modules, tensor algebras, and complete tensor algebras. This section also lists some basic properties of semi-fiber products and provides a characterization of such rings in Propositions~\ref{prop20250924a} and~\ref{pro20251105a}.\vspace{2mm}

Section~\ref{sec20251109a} of this paper is devoted to the lifting theory of modules and complexes; a topic that has been studied intensively by Auslander, Ding, Solberg~\cite{ADS} and Yoshino~\cite{yoshino} and is one of the most fundamental and applied concepts in commutative algebra. This theory has been recently generalized to the differential graded (DG) homological algebra setting for the purpose of studying the deformations of finitely generated modules over local rings; see~\cite{NOY-jadid, NOY3, NOY2, NOY1, NOY, nasseh:lql, nassehyoshino, OY}.\vspace{2mm}

In this paper, for a local $k$-algebra $R$ and an ideal $I$ of $R$, we are particularly interested in the liftability of the $R/I$-module $k$ along the natural $k$-algebra surjection $R\twoheadrightarrow R/I$. This is the point where the notions of semi-fiber products, lifting theory, and sections and retractions of certain $k$-algebra homomorphisms merge. Our main result in this paper is the following for which the proof is given in Section~\ref{sec20251109b}.

\begin{mthm*}\label{Main Theorem}
Assume that $(R, \m_R)$ is a noetherian local $k$-algebra and $(T,\frak m_T)$ is a complete $k$-algebra. Let $\varphi\colon (T, \m_T)\to (R, \m_R)$ be a flat $k$-algebra homomorphism, and let $\overline{R}= R / \m_TR$ be the fiber ring of $\varphi$. Then, the following statements are equivalent:
\begin{enumerate}[\rm(i)]
\item 
The $\overline{R}$-module $k$ is liftable to $R$;
\item
$\varphi$ has a retraction; 
\item
There is an ideal $\frak a$ of $R$ such that, setting $\fm_S=\frak a$ and $S=k\oplus \fm_S$, the $k$-algebra $R$ admits a semi-fiber product decomposition $R\cong T \ltimes _k S$.
\end{enumerate}
\end{mthm*}

As we explicitly state in Corollaries~\ref{cor20251018a} and~\ref{cor20250928a}, examples that fall into the situation of Main Theorem include the cases where:
\begin{enumerate}[\rm(a)]
\item
$T=k[\![\underline{x}]\!]$ with $\fm_T=(\underline{x})$, where $\underline{x}$ is an $R$-regular sequence and $R$ is complete;
\item
$T=k[x]$ with $\fm_T=(x)$, where $x\in \fm_R$ satisfies the condition $(0:x)=x^nR\neq (0)$ for a positive integer $n$.
\end{enumerate}\vspace{2mm}

Finally, it is worth mentioning that a variation of Main Theorem is proved in Section~\ref{sec20251109a}; see Corollary~\ref{cor20251031a}. This result is particularly interesting because it includes a necessary and sufficient condition for the liftability of $k$ in terms of sections, while our Main Theorem involves such a condition in terms of retractions.

\section{Semi-fiber products of algebras}\label{section20240729a}

In this section we define the notion of semi-fiber products of algebras and discuss some of their properties. We show that the class of semi-fiber products includes several classes of commutative rings. Furthermore, a characterization of semi-fiber products that might be of independent interest is given in Proposition~\ref{pro20251105a}.  

\begin{para}\label{disc20250911a}
For a $k$-algebra $(R, \m _R)$ we have the equality $R = k  \oplus  \m_R$ of $k$-vector spaces. 
Therefore, every element of $R$ is uniquely written as a sum of elements of $k$ and $\m_R$, that is, for every $r\in R$ there exist $\ell\in k$ and $x\in \fm_R$ such that
\begin{equation}\label{eq20250909b}
r=\ell\oplus x.
\end{equation}
Here, for clarity, we use the notation $\oplus$ for the addition of elements in a direct sum to distinguish it from the usual addition in a ring. For instance, note that for another element $r'=\ell'\oplus x'$ of $R$ with $\ell'\in k$ and $x'\in \fm_R$, the multiplication $rr'$ in $R$ translates to $rr'=\ell\ell'\oplus (\ell x'+\ell' x+xx')$, where $+$ is the usual addition in $R$.
\end{para}

In the next discussion, we lay out the foundation for the definition of semi-fiber products. Throughout the paper, we will automatically employ the assumptions and notation of this discussion whenever we talk about semi-fiber products.

\begin{para}\label{assumption}
Let $(R, \m_R)$ and $(S, \m_S)$ be $k$-algebras and assume that there is an $R$-action on $\m_S$, denoted by $\ast$, under which  $\m_S$ becomes an $(R, S)$-bimodule. In this circumstances, since $S$ is a $k$-algebra, for all elements $r=\ell\oplus x$ of $R$ as in~\eqref{eq20250909b}, $y \in \m_S$, and $s \in S$ we have the equalities
\begin{gather*}
\ell\ast y=\ell y\\
r\ast y=\ell y+x\ast y\\
r {\ast} (ys) = (r {\ast} y)s.
\end{gather*}
Equivalently, there exists a $k$-linear ring homomorphism $R \to \End _S (\m_S)$.
\end{para}

\begin{defn}\label{defn20251109a}
Consider the setting of~\ref{assumption}. We define the \emph{semi-fiber product} of $R$ by $S$, denoted $R \ltimes _k S$, as follows: 
\begin{enumerate}[\rm(a)]
\item
As a $k$-vector space, we set $R \ltimes _k S = k \oplus \m_R \oplus \m_S$. Note that, by~\ref{disc20250911a}, we have
\begin{align}
R \ltimes _k S&=R\oplus \fm_S\label{eq20250913a}\\
&=\fm_R\oplus S\notag
\end{align}
as $k$-vector spaces.
\item
For all elements $r,r'\in R$ and $y, y' \in \m_S$, considering the equality~\eqref{eq20250913a}, we can define a multiplication in $R \ltimes _k S$ by the formula
\begin{equation}\label{eq20250909a}
(r\oplus y) \cdot (r'\oplus y') =rr'\oplus \left(r {\ast} y' + r' {\ast}y + yy'\right).
\end{equation}
\end{enumerate}
Setting $\m _{R \ltimes _k S}= \m_R \oplus \m_S$, one can check that $R \ltimes _k S$  is a commutative ring with unity and hence, $(R \ltimes _k S, \m _{R \ltimes _k S})$ is a $k$-algebra. 
It follows from part (a) that both  $(R, \m_R)$ and $(S, \m_S)$ are naturally $k$-subalgebras of $(R \ltimes _k S, \m _{R \ltimes _k S})$.
\end{defn}

\begin{defn}
We say that a $k$-algebra $(A,\fm_A)$ \emph{admits a semi-fiber product decomposition} if there exist $k$-subalgebras $(R, \m_R)$ and $(S, \m_S)$ of $A$ such that an isomorphism $A\cong R \ltimes _k S$ of $k$-algebras holds.
\end{defn}

A universal property of the semi-fiber products can be described as follows.

\begin{para}
Let $(R, \m_R)$ and $(S, \m_S)$ be as in~\ref{assumption}. For a $k$-algebra $(T, \m_T)$, assume that there are a $k$-algebra homomorphism  $f\colon R \to T$ (hence, $\m_T$ is an $R$-module via $f$) and an $R$-module homomorphism $g\colon \m_S \to \m_T$  that preserves multiplication (that is, 
$g(yy') = g(y)g(y')$ for all elements $y, y' \in \m_S$). 
Then, there is a unique $k$-algebra homomorphism $\varphi\colon R \ltimes _k S \to T$ satisfying the equalities
$$
\varphi |_R = f\qquad \text{and}\qquad \varphi |_{\m_S} =g.
$$
In fact, for each element $r\in R$ and $y\in \fm_S$, the map $\varphi$ is defined by $\varphi\left(r\oplus y\right)=f(r)+g(y)$, which is indeed a $k$-algebra homomorphism using the equality~\eqref{eq20250909a}.
%
%Now, if $\widetilde{f}\colon R\to R\ltimes_k S$ and $\widetilde{g}\colon \fm_S\to \fm_{R\ltimes_k S}$ are the natural inclusions, then the pair $(\widetilde{f},\widetilde{g})$ is universal among the pairs $(f,g)$ as above. In fact, one can define an order on the set of pairs $(f,g)$ such that $(\widetilde{f},\widetilde{g})$ is the initial pair in this order. 
\end{para}

The class of semi-fiber products of algebras includes several classes of commutative rings that are listed in the following examples. 

\begin{ex}\label{ex20251013a}
Let $(R,\frak m_R, k)$ and $(S,\frak m_S,k)$ be commutative noetherian local rings with a common residue field $k$. Then, the \emph{fiber product} of $R$ and $S$ is defined to be
$$
R\times_kS=\{(r,s)\in R\times S\mid \pi_R(r)=\pi_S(s)\}
$$
where $R\xra{\pi_R} k\xla{\pi_S}S$ are the natural surjections. Note that $R\times_k S$ is a commutative noetherian local ring with maximal ideal $\fm_{R\times_k S}=\fm_R\oplus \fm_S$ and residue field $k$. Note also that the class of such fiber product rings coincides with the class of commutative noetherian local rings with decomposable maximal ideal; we refer the reader to \cite{dress, freitas, Geller, kostrikin, moore, NOY-jadidtar, nasseh:vetfp, NST, nasseh:ahplrdmi, nasseh:lrqdmi, ogoma:edc} for the properties and applications of fiber product rings.

Assume further that $R$ and $S$ are $k$-algebras. Defining the $R$-action $\ast$ on $\fm_S$ by the equality $\m_R {\ast} \m_S =0$, we see that $R \times _k S\cong R \ltimes _k S$. More precisely, under this isomorphism, every element $\ell\oplus x\oplus y$ in $R \ltimes _k S$ with $\ell\in k$, $x\in \fm_R$, and $y\in \fm_S$ corresponds to the element $(\ell\oplus x,\ell\oplus y)$ in $R\times_k S$. Hence, every fiber product of $k$-algebras of the above form is a semi-fiber product, as the names suggest.
\end{ex}

\begin{ex}\label{ex20251107a}
The \emph{trivial extension} (also known as \emph{Nagata's idealization}) of a ring $R$ by an $R$-module $M$ is defined to be the $R$-module $R\oplus M$ that is equipped with a ring structure given by the multiplication $(r\oplus m)\cdot (r'\oplus m')=rr'\oplus (rm'+r'm)$ for all $r,r'\in R$ and $m,m'\in M$. Note that $M$ is an ideal of $R\ltimes M$ with $M^2=0$.

For a $k$-algebra $(R, \m_R)$ and an $R$-module $M$, setting $S = k \ltimes M$, where $M$ is considered as a $k$-vector space (hence, $(S, \m_S)$ is a local $k$-algebra with the maximal ideal $\m_S = M$), we have a natural isomorphism $R \ltimes M\cong R \ltimes _k S$ of $k$-algebras. Hence, every trivial extension of a $k$-algebra by a module is a semi-fiber product.
\end{ex}

\begin{ex}\label{ex20250923a}
Let $(R, \m_R)$ and $(T, \m_T)$ be $k$-algebras. Note that the tensor product $(R \otimes _k T, \m_R \otimes _k T + R \otimes _k \m_T)$ is a  $k$-algebra, which we refer to as a tensor algebra. 
Now, setting $S= k\oplus \m_S$, where $\m_S= R \otimes _k \m_T$ (hence, $\m_S$ is an $R$-module), we have that $(S, \m_S)$ is a $k$-algebra and there is an isomorphism $R  \otimes _k T\cong R \ltimes _k S$ of $k$-algebras. Therefore, every tensor algebra is a semi-fiber product. Note that $(S, \m_S)$ is not necessarily noetherian; see Example~\ref{ex20250924a}.
\end{ex}

\begin{ex}\label{ex20251104a}
Let $(R, \m_R)$ and $(T, \m_T)$ be complete local $k$-algebras. Similar to Example~\ref{ex20250923a}, every complete tensor product $k$-algebra $(R \widehat{\otimes} _k T, \m_R \widehat{\otimes} _k T + R \widehat{\otimes} _k \m_T)$ is a semi-fiber product. More precisely, in this case we have $R  \widehat{\otimes} _k T\cong R \ltimes _k S$, where $S= k \oplus \m_S$ and $\m_S= R \widehat{\otimes} _k \m_T$.
\end{ex}

In Definition~\ref{defn20251109a}, if the $R$-action $\ast$ on $\fm_S$ comes through a homomorphism of commutative noetherian local $k$-algebras, then the notion of semi-fiber product degenerates to that of the ordinary fiber product introduced in Example~\ref{ex20251013a}. We record this in the following result.

\begin{prop}\label{induced by f}
Let $(R, \m_R)$ and $(S, \m_S)$ be commutative noetherian local $k$-algebras. If the $R$-module structure on $\m_S$ is induced through a $k$-algebra homomorphism $f\colon R \to S$, then the map $\psi\colon R \ltimes _k S \to R \times _k S$ defined by the formula 
$$
\psi(\ell \oplus x \oplus y)=(\ell \oplus x,\ell\oplus (y + f(x)))
$$
with $\ell \in k$, $x \in \m_R$, and $y \in \m_S$, is an isomorphism of $k$-algebras.
\end{prop}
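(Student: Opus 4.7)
The plan is to rewrite $\psi$ in a more transparent form and then check in turn that it lands in the fiber product, that it is a ring homomorphism, and that it is bijective by exhibiting an explicit inverse.

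First I would reformulate the defining expression. For $r = \ell \oplus x \in R$ with $\ell \in k$ and $x \in \m_R$, the fact that $f$ is a $k$-algebra map with $f(\m_R) \subseteq \m_S$ gives $f(r) = \ell \oplus f(x)$ in $S$, so the definition of $\psi$ takes the cleaner shape
\[
\psi(r \oplus y) \;=\; \bigl(r,\ f(r) + y\bigr),
\]
with the sum on the right taking place in $S$. Since both $r$ and $f(r)+y$ reduce to the same scalar $\ell$ under the residue projections $R \twoheadrightarrow k$ and $S \twoheadrightarrow k$, the pair $(r,\, f(r)+y)$ indeed lies in $R \times_k S$, and $\psi$ is visibly $k$-linear.

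Next I would verify multiplicativity, which I expect to be the main (though still routine) bookkeeping step. For $r, r' \in R$ and $y, y' \in \m_S$, the multiplication formula~\eqref{eq20250909a} in $R \ltimes_k S$ reads
\[
(r \oplus y)(r' \oplus y') \;=\; rr' \oplus \bigl(r \ast y' + r' \ast y + yy'\bigr),
\]
and because the $R$-action on $\m_S$ is induced by $f$, we have $r \ast y' = f(r)y'$ and $r' \ast y = f(r')y$. Applying $\psi$ to the right-hand side, and comparing with
\[
\psi(r \oplus y)\,\psi(r' \oplus y') \;=\; \bigl(rr',\ (f(r)+y)(f(r')+y')\bigr),
\]
the equality of the second components reduces to the identity $f(rr') = f(r)f(r')$, which holds since $f$ is a ring homomorphism. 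Thus $\psi$ is a $k$-algebra homomorphism.

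Finally, I would establish bijectivity by writing down the inverse directly. Given $(r, s) \in R \times_k S$ whose common residue is $\ell \in k$, set $y = s - f(r)$; then $y \in \m_S$ because $s$ and $f(r)$ both reduce to $\ell$, and the assignment $(r, s) \mapsto r \oplus y$ is $k$-linear. A one-line check shows that this is a two-sided inverse of $\psi$: composing in one direction gives $(r, f(r) + (s - f(r))) = (r, s)$, and in the other it returns $r \oplus y$ unchanged. Hence $\psi$ is an isomorphism of $k$-algebras.
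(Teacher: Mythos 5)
Your proof is correct and follows essentially the same route as the paper: verify that $\psi$ is a $k$-algebra homomorphism via the multiplication formula~\eqref{eq20250909a} (using that the action is $r\ast y = f(r)y$), and establish bijectivity by exhibiting the explicit preimage $(r,s)\mapsto r\oplus(s-f(r))$, which is exactly the paper's surjectivity argument. Your write-up simply spells out the multiplicativity bookkeeping that the paper leaves as "can be checked easily."
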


\begin{proof}
The fact that $\psi$ is a $k$-algebra homomorphism can be checked easily using the equality~\eqref{eq20250909a}. Also, for all $(\ell\oplus x,\ell\oplus y)$ in $R\times_kS$ note that
$$
\psi(\ell \oplus x \oplus (y-f(x)))=(\ell \oplus x,\ell\oplus y)
$$
and therefore, $\psi$ is surjective. The fact that $\psi$ is injective is obvious. 
\end{proof}

The next result records some of the basic facts about semi-fiber products. The reader might be curious to know what other ring theoretic, homological, or geometric properties semi-fiber products have, i.e., blanket rules that cover all of the Examples~\ref{ex20251013a} - \ref{ex20251104a} simultaneously. At this time, that topic is not our main focus and of interest in this paper.  

\begin{prop}\label{prop20250924a}
Let $(R, \m_R)$ and $(S, \m_S)$ be as in~\ref{assumption}. Then, the following statements hold:

\begin{enumerate}[\rm(a)]
\item\label{item20250923a}
If $R$ and $S$ are finitely generated $k$-algebras, then so is $R \ltimes_k S$. 
\item\label{item20250923b}
If $R$ and $S$ are localizations of finitely generated $k$-algebras, then so is $R \ltimes_k S$.
\item\label{item20250923c}
Assume that $(R,\fm_R)$ and $(S,\fm_S)$ are complete local $k$-algebras and that the $\m_R$-adic topology on $\m_S$ is separated, that is, $\bigcap _{n=1}^\infty (\m_R^n\ast \m_S) =(0)$. Then, $R \ltimes_k S$ is a complete local $k$-algebra.
\item\label{item20250923d}
If $R$ is noetherian and $\m_S$ is finitely generated as an $R$-module, then $R \ltimes_k S$ is also noetherian. 
\item\label{item20250923e}
If $R \ltimes_k S$ is noetherian, then so is $R$.
\end{enumerate}
\end{prop}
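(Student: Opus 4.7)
The common thread across all five parts is the structural identification $R \ltimes_k S = R \oplus \fm_S$ as $R$-modules (so $R \ltimes_k S$ is a finitely generated $R$-module precisely when $\fm_S$ is), together with the fact that $\fm_S$ is an ideal of $R \ltimes_k S$ whose quotient is isomorphic to $R$. From this, parts~\eqref{item20250923d} and~\eqref{item20250923e} are immediate: for (d), $R \ltimes_k S$ is finitely generated as a module over the noetherian ring $R$, hence noetherian as an $R$-module, so every ascending chain of ideals (being a chain of $R$-submodules) stabilizes; for (e), $R \cong (R \ltimes_k S)/\fm_S$ is a quotient of a noetherian ring.

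For \eqref{item20250923a}, I would pick $k$-algebra generators $r_1, \ldots, r_m$ of $R$ and $s_1, \ldots, s_n$ of $S$, and after subtracting from each $s_j$ its image under $S \twoheadrightarrow k$, arrange that $s_j \in \fm_S$. Every element of $R \ltimes_k S$ has the form $r \oplus y$ with $r \in R$ and $y \in \fm_S$, and the multiplication rule~\eqref{eq20250909a}, together with $r \ast y \in \fm_S$, makes it routine to check that the combined list $r_1, \ldots, r_m, s_1, \ldots, s_n$ generates $R \ltimes_k S$ as a $k$-algebra.

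For \eqref{item20250923b}, I write $R = R_0[W_R^{-1}]$ and $S = S_0[W_S^{-1}]$ with $R_0 \subseteq R$ and $S_0 \subseteq S$ finitely generated $k$-subalgebras. Since $\fm_S \cap S_0$ is finitely generated as an ideal of the noetherian ring $S_0$, I can enlarge $R_0$ and $S_0$ to absorb the finitely many products $r_i \ast s_j$ arising from generators of $R_0$ and of $\fm_S \cap S_0$, so that $\ast$ restricts to an action of $R_0$ on $\fm_S \cap S_0$. Then~\eqref{item20250923a} produces a finitely generated $k$-subalgebra $R_0 \ltimes_k S_0 \subseteq R \ltimes_k S$, and I would use the universal property of localization to identify $R \ltimes_k S$ with the localization of $R_0 \ltimes_k S_0$ at the multiplicative set generated by the images of $W_R$ and $W_S$.

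For \eqref{item20250923c}, the multiplication rule~\eqref{eq20250909a} yields by induction the filtration
\[
\fm_{R \ltimes_k S}^n = \fm_R^n \oplus \Bigl( \sum_{\substack{i+j = n \\ j \geq 1}} \fm_R^i \ast \fm_S^j \Bigr),
\]
so checking completeness of $R \ltimes_k S$ in the $\fm_{R \ltimes_k S}$-adic topology reduces to simultaneously lifting Cauchy data on the $R$-summand (handled by completeness of $R$) and on the $\fm_S$-summand (handled by completeness of $S$ together with the separation hypothesis $\bigcap_n \fm_R^n \ast \fm_S = 0$). The main obstacle lies in (c), where one must carefully compare the mixed filtration on $\fm_S$ with the $\fm_S$-adic filtration and verify that the induced inverse limit computation recovers $\fm_S$ itself; the analogous, though less delicate, bookkeeping needed to descend $\ast$ to a finitely generated $k$-subalgebra in (b) is the other technical point.
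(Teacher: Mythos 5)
Your treatment of parts (a), (b), (d), and (e) matches what the paper intends --- it declares those parts straightforward and proves only (c) --- and your setup for (c), computing $\m_A^n$ for $A = R\ltimes_k S$ as $\m_R^n$ plus the mixed sums $\m_R^i\ast\m_S^j$, is exactly the paper's first step. But you stop at the decisive point: you acknowledge that one must ``carefully compare the mixed filtration on $\fm_S$ with the $\fm_S$-adic filtration'' and you do not carry out that comparison, and without it the argument does not close. Concretely, if $\{a_n\oplus b_n\}$ is Cauchy for the $\m_A$-adic topology, the differences $b_{n+1}-b_n$ land in $\sum_{i+j=N,\, j\geq 1}\m_R^i\ast\m_S^j$, which is not visibly contained in any power of $\m_S$; so completeness of $S$ alone does not let you converge the second component, and the separation hypothesis by itself only says that the filtration $\{\m_R^n\ast\m_S\}$ has zero intersection, not that it is cofinal with $\{\m_S^n\}$.

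The missing ingredient is Chevalley's theorem (the paper cites Nagata, Theorem (30.1)): in a complete local ring, a descending chain of ideals with zero intersection is cofinal with the powers of the maximal ideal. Applied to $\{\m_R^n\ast\m_S\}$ inside $S$, this yields for each $n$ an integer $m(n)$ with $\m_R^{m(n)}\ast\m_S\subseteq\m_S^n$, whence $\m_A^{m(n)+n}\subseteq\m_R^n\oplus\m_S^n$, since each term $\m_R^i\ast\m_S^j$ with $i+j=m(n)+n$ has either $i\geq m(n)$ or $j\geq n$. This containment is precisely what makes the two component sequences of a Cauchy sequence Cauchy in the $\m_R$-adic and $\m_S$-adic topologies respectively, and it finishes (c). So your overall route is the same as the paper's, but as written the proof of (c) is incomplete at exactly the step you flagged as the main obstacle.
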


We prove only part~\eqref{item20250923c} as the proofs of the other statements are straightforward.

\begin{proof}
To prove part~\eqref{item20250923c}, let  $A = R \ltimes_k S$ and  $\m_A = \m_R \oplus \m_S$. 
Then, we have the equality $\m_A ^2 = \m_R^2 \oplus (\m_R\ast \m_S + \m_S^2)$. More generally, it follows by induction that 
$$
\m_A^n = \m_R^n \oplus \left(  \m_R ^{n-1} \ast \m_S +\m_R ^{n-2} \ast \m_S^2 + \cdots + \m_R\ast \m_S^{n-1} + \m_S^n\right)   
$$
for all integers $n \geq 1$. Since by our assumption the topology on $\m_S$ defined by the ideal sequence $\{\m_R ^{n} \ast \m_S\mid  n \geq 1 \}$ is separated, 
it follows from~\cite[Theorem (30.1)]{Nagata} that for any positive integer $n$ there exists a positive integer $m(n)$ such that the containment $\m_R^{m(n)} \ast \m_S \subseteq \m_S ^n$ holds.
Therefore, for all positive integers $n$ we have 
\begin{equation}\label{eq20250923d}
\m_A ^{m(n)+n} \subseteq \m_R^n \oplus \m_S ^n . 
\end{equation}
If $\mathcal{S}=\{a_n \oplus b_n  \in \m_R \oplus \m_S\mid n \geq 1\}$ is a Cauchy sequence in the $\m_A$-adic topology, then by~\eqref{eq20250923d} the sequences $\{a_n\mid n\geq 1\}$ and $\{b_n\mid n\geq 1\}$ are also Cauchy sequences in $R$ and $S$ in the $\fm_R$-adic and $\fm_S$-adic topologies, respectively. Thus, $\mathcal{S}$ converges by our assumption, that is, $R \ltimes_k S$ is a complete local $k$-algebra.
\end{proof}

Our next example shows that in part~\eqref{item20250923e} of Proposition~\ref{prop20250924a}, the assumption that $R \ltimes_k S$ is noetherian does not necessarily imply that $S$ is noetherian.
%This example also demonstrate the situation in Example~\ref{ex20250923a}. 

\begin{ex}\label{ex20250924a}
Let $A=k[x,y]$, $R=k[x]$, and $S=k[x^ny\mid 0\leq n\in \mathbb{Z}]$. Then, the $k$-algebra $A$ admits a semi-fiber product decomposition
$A \cong R\ltimes_k S$.
\end{ex}

We conclude this section with the following result that provides a simple characterization of semi-fiber products which might be of independent interest.

\begin{prop}\label{pro20251105a}
The following conditions are equivalent for a $k$-algebra  $(A, \m_A)$:
\begin{enumerate}[\rm(i)]
\item
$A$ admits a semi-fiber product decomposition $A \cong R \ltimes _k S$ into $k$-subalgebras $(R, \fm_R)$ and $(S, \fm_S)$;
\item 
There exists a direct sum decomposition  $\m_A=\frak u \oplus \frak I$ of $k$-vector spaces, where $\frak u$ is a multiplicatively closed subset of $A$ and $\frak I$ is an ideal of $A$.
\end{enumerate}
\end{prop}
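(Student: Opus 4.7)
The plan is to verify the equivalence directly by translating between the internal decomposition of $\fm_A$ and the external semi-fiber product construction. Both directions boil down to careful bookkeeping with the multiplication formula~\eqref{eq20250909a}.

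For the implication (i) $\Rightarrow$ (ii), I would assume $A \cong R \ltimes_k S$ and take $\fu = \fm_R$ and $\fI = \fm_S$. Since $\fm_A = \fm_R \oplus \fm_S$ by construction, the $k$-vector space decomposition is automatic. The multiplicative closure of $\fu$ follows from the formula $(r \oplus 0) \cdot (r' \oplus 0) = rr' \oplus 0$ for $r, r' \in \fm_R$, using that $\fm_R$ is an ideal of $R$. To see that $\fI = \fm_S$ is an ideal of $A$, I would read off~\eqref{eq20250909a} that for any $r \in R$ and $y, y' \in \fm_S$ one has $(r \oplus 0) \cdot (0 \oplus y) = 0 \oplus (r \ast y) \in \fm_S$ and $(0 \oplus y)(0 \oplus y') = 0 \oplus yy' \in \fm_S$, so $A \cdot \fm_S \subseteq \fm_S$.

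For the implication (ii) $\Rightarrow$ (i), starting from $\fm_A = \fu \oplus \fI$, I would set $R := k \oplus \fu$ and $S := k \oplus \fI$ as $k$-subspaces of $A$. The key verifications are:
\begin{enumerate}[\rm(1)]
\item $R$ is a $k$-subalgebra of $A$ with maximal ideal $\fu$: this uses that $\fu$ is a $k$-subspace and multiplicatively closed, so that $(\ell + u)(\ell' + u') = \ell\ell' + (\ell u' + \ell' u + uu') \in k \oplus \fu$.
\item $S$ is a $k$-subalgebra with maximal ideal $\fI$: here I use that $\fI$ is an ideal of $A$, so in particular $\fI \cdot \fI \subseteq \fI$ and $k \cdot \fI \subseteq \fI$.
\item $\fI = \fm_S$ is an $(R, S)$-bimodule in the sense of~\ref{assumption}, where the $R$-action is simply restriction of multiplication in $A$; the required $k$-linear ring homomorphism $R \to \End_S(\fm_S)$ comes for free from the fact that $\fI$ is an ideal of $A$.
\end{enumerate}

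With these data in hand, I would construct the candidate isomorphism $\Phi \colon R \ltimes_k S \to A$ by $\Phi(\ell \oplus u \oplus i) = \ell + u + i$, where the left-hand side uses the decomposition $R \ltimes_k S = k \oplus \fu \oplus \fI$ and the right-hand side uses $A = k \oplus \fm_A = k \oplus \fu \oplus \fI$. Bijectivity on $k$-vector spaces is immediate from the hypothesis $\fm_A = \fu \oplus \fI$. The one computation to grind through is that $\Phi$ respects multiplication: expanding $(\ell + u + i)(\ell' + u' + i')$ inside $A$ and collecting terms according to the decomposition $k \oplus \fu \oplus \fI$, the $k$-part is $\ell\ell'$, the $\fu$-part is $\ell u' + \ell' u + uu'$ (which matches the product of the $R$-components as in item (1)), and the $\fI$-part is $\ell i' + \ell' i + u i' + u' i + i i' = (\ell + u) \ast i' + (\ell' + u') \ast i + i i'$, matching~\eqref{eq20250909a} exactly.

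I expect the main obstacle to be purely notational: making sure that the two a priori different decompositions of $A$ (as $k \oplus \fm_A$ versus as $R \oplus \fI$ versus as $k \oplus \fu \oplus \fI$) are being used consistently, and that the $R$-action $\ast$ on $\fI$ used to form $R \ltimes_k S$ is really the restriction of the multiplication in $A$. Once that is pinned down, the multiplication-respecting computation above is a direct expansion, and both implications reduce to a single line of verification apiece.
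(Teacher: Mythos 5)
Your proposal is correct and follows essentially the same route as the paper's own proof: for (i)$\implies$(ii) take $\frak u = \m_R$ and $\frak I = \m_S$, and for (ii)$\implies$(i) set $R = k \oplus \frak u$, $S = k \oplus \frak I$ with the $R$-action on $\frak I$ given by multiplication in $A$. The paper simply leaves the multiplication-respecting verification implicit, whereas you spell it out.
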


\begin{proof}
The implication (i)$\implies$(ii) follows readily by setting  $\frak u = \m_R$ and $\frak I=\m_S$.

(ii)$\implies$(i): Let $R = k\oplus \frak u$ and $S = k \oplus \frak I$. Setting $\fm_R=\frak u$ and $\fm_S=\frak I$, note that $(R, \fm_R)$ and $(S, \fm_S)$ are $k$-subalgebras of $(A, \fm_A)$. Defining the $R$-action on $\fm_S$ by the multiplication of elements of $R$ and $\fm_S$ in $A$, we see that $A \cong R \ltimes _k S$.
\end{proof}

\section{Lifting of complexes}\label{sec20251109a}

In this section, we introduce the notion of liftability and provide some examples. Using the Poincar\'e series as a tool, we prove Theorem~\ref{thm20251102b} which enables us to characterize the liftability of $k$ with respect to the notion of sections; see Corollary~\ref{cor20251031a}. We start with the following convention to use throughout this section.

\begin{convention}\label{conv20251106a}
In this section, $(R,\fm_R)$ is a $k$-algebra and for an ideal $I\subseteq \m_R$ of $R$, we denote by $(\overline{R}, \m_{\overline{R}})$ the residue $k$-algebra $R/I$ with $\m_{\overline{R}} = \m_R/I$. Furthermore, we let $\pi$ denote the natural surjection $R\twoheadrightarrow \overline{R}$.
\end{convention}

Next, we remind the reader of the classical definition of lifting.

\begin{defn}\label{para20240616a}
%For a $k$-algebra $(R,\fm_R)$ and an ideal $I\subseteq \m_R$ of $R$, let $(\overline{R}, \m_{\overline{R}})$ be the residue $k$-algebra $R/I$ with $\m_{\overline{R}} = \m_R/I$.
%Also, $\alpha \colon (R,\fm_R)\to (\overline{R}, \m_{\overline{R}})$ is the natural surjection.
%An $R$-free complex (i.e., a complex of free $R$-modules) $F$ is called \emph{minimal} if the $R$-complex $F \otimes _R k$ has zero differential. 
%For an ideal $I$ of $R$, a minimal $R/I$-free complex $G$ is said to be \emph{liftable} to $R$ if there is a minimal $R$-free complex $L(G)$ such that
We say that an $\overline{R}$-free complex (i.e., a complex of free $\overline{R}$-modules) $F$ is \emph{liftable} to $R$ (along $\pi$) if there exists an $R$-free complex $L$ such that
$L \otimes _R \overline{R} \cong  F$ as $\overline{R}$-complexes. In this situation, we say that $L$ is a \emph{lifting} of $F$ to $R$.
%For a finitely generated $R$-module $M$, the notation $F^R_M$ is used for an $R$-free resolution of $M$.
We call an $\overline{R}$-module $M$
%We use the notation $F^{\overline{R}}_M$ for an $\overline{R}$-free resolution of $M$. 
\emph{liftable} to $R$ if there is an $\overline{R}$-free resolution of $M$ that is liftable to $R$.
%In the rest of the paper, when we say ``$M$ is liftable to $R$'', we automatically consider the notations $F^{\overline{R}}_M$ and $L^R_M$.  
\end{defn}

\begin{para}\label{para20251104a}
In Definition~\ref{para20240616a}, if $(R,\fm_R)$ is a noetherian complete local ring and $I$ is generated by an $R$-regular sequence, then a bounded below $\overline{R}$-complex $F$ consisting of finitely generated free $\overline{R}$-modules is liftable to $R$ along $\pi$ provided that $\Ext^2_{\overline{R}}(F,F)=0$; see Yoshino~\cite[Lemma (3.2)]{yoshino}. This statement was generalized by Nasseh and Sather-Wagstaff~\cite[Main Theorem]{nasseh:lql} to the case where $I$ is not necessarily generated by an $R$-regular sequence. Such a generalization was possible by replacing $\overline{R}$ by the Koszul complex $K^R(I)$ and assuming that $F$ is a homologically bounded below and homologically degreewise finite semifree DG $K^R(I)$-module.
\end{para}

%\begin{defn}
%A chain complex $\overline{F}$ consisting of finitely generated free $\overline{R}$-modules is \emph{liftable} to $R$ if there is a chain complex $F$ of free $R$-modules such that 
%$F \otimes _R \overline{R} \cong \overline{F}$. In this situation, $F$ is called a \emph{lifting} of  $\overline{F}$ to $R$. 
%Also, finitely generated $\overline{R}$-module $N$ is said to be \emph{liftable} to $R$ if an $\overline{R}$-free resolution of $N$ is liftable to $R$. 
%\end{defn}

As we mentioned in the introduction, in this paper we are interested in the liftability of the $\overline{R}$-module $k$ to $R$. Thus, at this point, we proceed by providing some examples in this direction.
In the following examples, $\overline{x}$ and $\overline{y}$ represent the residues of the variables $x$ and $y$ in $\overline{R}$. Also, the symbol $\simeq$ is used for quasi-isomorphisms. Our first example considers the lifting property of the $\overline{R}$-module $k$ along $\pi$, where $R$ is a semi-fiber product which is not a fiber product; see Example~\ref{ex20250924a}.

\begin{ex}
Let $R = k[x, y]$ and $\overline{R} = R/xR\cong k[\overline{y}]$. Then, the $\overline{R}$-free resolution
$0\to \overline{R} \xra{\overline{y}} \overline{R} \to 0$ of the residue field $k$ is liftable to $R$ with the unique (up to quasi-isomorphism) lifting $L: 0 \to R \xra{y} R \to 0$.
Note that $L\simeq R/yR$.
%(Note that $F\simeq R/yR$ and hence, $R/yR$ is a lifting of $k$ to $R$.
%\color{blue} \texttt{One section for the retraction $R\twoheadrightarrow \overline{R}$ in this example.} \color{black}
\end{ex}

The next example considers the lifting property of the $\overline{R}$-module $k$ along $\pi$, where $R$ is a fiber product ring.

\begin{ex}\label{ex20251102a}
Let  $R = k[x, y]/(xy) \cong k[x] \times _k k[y]$ and $\overline{R} = R/(x-y) \cong k[\overline{x}]/(\overline{x}^2)$. 
Then, the $\overline{R}$-free resolution
$F: \cdots \to \overline{R} \xra{\overline{x}} \overline{R} \xra{\overline{x}} \overline{R} \xra{\overline{x}} \overline{R} \to 0$
of the residue field $k$ is liftable to $R$ with the liftings
\begin{gather*}
\left(\cdots \to R \xra{x} R \xra{y} R \xra{x} R \to 0\right)\simeq R/xR\\
\left(\cdots \to R \xra{y} R \xra{x} R \xra{y} R \to 0\right)\simeq R/yR.
\end{gather*}
Note that these are the only liftings of $F$ to $R$ up to quasi-isomorphism.
%\color{blue} \texttt{No sections for the retraction $R\twoheadrightarrow \overline{R}$ in this example.} \color{black}
\end{ex}

The next example exhibits some cases for which $k$ is not liftable along $\pi$.

\begin{ex}
Consider the following cases:
\begin{enumerate}[\rm(a)]
\item
$R=k[x, y] /(x^2-y^3)$ and $\overline{R} = R/(y) \cong k[\overline{x}]/(\overline{x}^2)$; or
\item
$R=k[x]$ and $\overline{R} = R/(x^2)\cong k[\overline{x}]/(\overline{x}^2)$.
\end{enumerate} 
In both of these cases, the $\overline{R}$-free resolution
$\cdots \to \overline{R} \xra{\overline{x}} \overline{R} \xra{\overline{x}} \overline{R} \xra{\overline{x}} \overline{R} \to 0$
of the residue field $k$ is  not  liftable to $R$.
\end{ex}

Here is our main result in this section which provides a necessary condition for the liftability of $k$ along $\pi$.

\begin{thm}\label{thm20251102b}
If $(R, \m_R)$ is a noetherian local $k$-algebra and the $\overline{R}$-module $k$ is liftable to $R$, then $I$ is generated by a part of a minimal generating set of $\m_R$.
\end{thm}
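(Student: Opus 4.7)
The plan is to use the lift of a resolution of $k$ over $\overline{R}$ to construct, via totalization with a minimal $R$-free resolution of $\overline{R}$, a minimal $R$-free resolution of $k$ whose Betti numbers are the convolution of the relevant Betti numbers; reading this equality in homological degree one will give the theorem.

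The first step is to reduce to the case that the hypothesis provides a lift $L$ of the \emph{minimal} $\overline{R}$-free resolution $F$ of $k$. Any $\overline{R}$-free resolution of $k$ decomposes as $F \oplus T$ with $T$ a direct sum of trivial disks $\overline{R} \xrightarrow{1} \overline{R}$; given a lift of such a resolution, each disk in $T$ is matched in $L$ by a rank-one $R$-subcomplex whose differential is a lift of $1$, hence lies in $1 + I \subseteq 1 + \m_R$ and is a unit of the local ring $R$. A standard change of basis then peels off these contractible pieces as $R$-complex summands and leaves a lift of $F$ alone. After this reduction, each entry of $d^L$ reduces modulo $I$ to an entry of $d^F$ in $\m_{\overline{R}}$, so all entries of $d^L$ lie in $\pi^{-1}(\m_{\overline{R}}) = \m_R$.

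Next, I would take a minimal $R$-free resolution $(P, d^P)$ of $\overline{R}$ and form the total complex $E := \mathrm{Tot}(L \otimes_R P)$. Because $L$ is termwise $R$-free and $P \simeq \overline{R}$ in $\mathrm{D}(R)$,
\[
E \;\simeq\; L \lotimes_R \overline{R} \;\cong\; L \otimes_R \overline{R} \;\simeq\; k
\]
in $\mathrm{D}(R)$, so $E$ is an $R$-free resolution of $k$. Its differential is assembled from entries of $d^L$ and $d^P$, all of which lie in $\m_R$ by minimality of $F$ and of $P$; hence $E$ is in fact a minimal $R$-free resolution of $k$. Comparing ranks gives the Poincar\'e series equality $P_k^R(t) = P_k^{\overline{R}}(t) \cdot P_{\overline{R}}^R(t)$, and extracting the coefficient of $t$ yields $\mu(\m_R) = \mu(\m_{\overline{R}}) + \mu(I)$, where $\mu$ denotes the minimal number of generators.

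Finally, any lift to $R$ of a minimal generating set of $\m_{\overline{R}}$, together with a minimal generating set of $I$, always generates $\m_R$ and consists of exactly $\mu(\m_{\overline{R}}) + \mu(I)$ elements; the equality just obtained forces this combined set to be minimal, so the chosen generators of $I$ form a part of a minimal generating set of $\m_R$. I expect the main obstacle to be the opening reduction: verifying that a contractible direct summand of $L \otimes_R \overline{R}$ lifts to an actual $R$-free contractible summand of $L$ (not merely up to homotopy). Once that reduction is in place, the totalization and Poincar\'e series computation are routine.
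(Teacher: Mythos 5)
Your argument is correct and converges on exactly the same Poincar\'e series identity $P^R_k(t)=P^{\overline{R}}_k(t)\,P^R_{\overline{R}}(t)$ that drives the paper's proof, with an identical extraction of the coefficient of $t$ at the end; the difference lies in how the identity is established. The paper (see \ref{lemma1} and Lemma~\ref{lemma2}) tensors an \emph{arbitrary} lifting $L$ with an $R$-free resolution of $\overline{R}$ and then invokes \cite[Lemma (1.5.3)]{AF} twice --- once for $k\simeq L\otimes_R\overline{R}$ over $\overline{R}$ to get $P^{\overline{R}}_k(t)=P^R_L(t)$, and once for $k\simeq L\otimes_R F$ over $R$ to get $P^R_k(t)=P^R_L(t)P^R_{\overline{R}}(t)$ --- so no minimality of the lifting or of the lifted resolution is ever needed. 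You instead manufacture an honest minimal $R$-free resolution of $k$ by totalization, which forces the preliminary reduction to a lifting of the \emph{minimal} resolution; that is precisely the step you flag as the main obstacle. It does go through: a unit entry of the differential of the lifted resolution reduces from an entry of $d^L$ lying in $1+I\subseteq 1+\m_R$, hence a unit of the local ring $R$, and Gaussian elimination splits off a genuine disk summand of $L$ as a complex, terminating in each degree because the free modules have finite rank. But this reduction is exactly the work that the Avramov--Foxby lemma lets the paper avoid. In short, your route buys self-containedness (no external Poincar\'e series lemma, and an explicit minimal resolution of $k$ as a byproduct) at the cost of the splitting-off argument, while the paper's route is shorter but leans on the citation.
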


The proof of this theorem is given after some preparation.

\begin{para}\label{lemma1}
Let $M$ be a finitely generated $\overline{R}$-module that is liftable to $R$ with a lifting $R$-complex $L$, and let $F$ be an $R$-free resolution of $\overline{R}$. 
Note that $L \otimes _R F$ is a complex of free $R$-modules. Also, as both $R$- and $\overline{R}$-complexes we have
$M\simeq L \otimes _R \overline{R}$. Hence, $M\simeq L \otimes _R F$ as $R$-complexes, and therefore, $L \otimes _R F$ is an $R$-free resolution of $M$.
\end{para}

\begin{defn}
Assume that $(R,\fm_R,k)$ is a noetherian local ring. Recall that the \emph{Poincar\'e series} of a homologically bounded below and homologically degreewise finite $R$-complex $N$ is defined to be $$P^R_N(t)=\sum_{i\in\bbz}\dim_k\left(\Tor^R_i(k,N)\right)t^i\in \mathbb{Z}[\![t]\!].$$
\end{defn}

As an consequence of our discussion in~\ref{lemma1} we get the following result.

\begin{lem}\label{lemma2}
If $(R, \m_R)$ is a noetherian local $k$-algebra, then for a finitely generated $\overline{R}$-module $M$ that is liftable to $R$ along $\pi$ we have the equality
\begin{equation}\label{eq20250926a}
P^R_M (t) = P ^{\overline{R}}_M(t) P^R_{\overline{R}}(t). 
\end{equation}
\end{lem}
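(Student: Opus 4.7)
The plan is to compute $\Tor^R_*(k,M)$ using the $R$-free resolution of $M$ built in~\ref{lemma1}, then factor the calculation through $\overline{R}$ to expose a product structure. Let $L$ be a lifting to $R$ of an $\overline{R}$-free resolution of $M$, and let $F$ be an $R$-free resolution of $\overline{R}$. By~\ref{lemma1}, the complex $L\otimes_R F$ is an $R$-free resolution of $M$, so $L\lotimes_R\overline{R}\simeq M$ in the derived category of $R$-modules. Associativity of the derived tensor product therefore gives
$$
k\lotimes_R M \;\simeq\; k\lotimes_R(L\lotimes_R\overline{R}) \;\simeq\; (k\lotimes_R L)\lotimes_R\overline{R}.
$$
Setting $V:=k\lotimes_R L$ and rewriting the lifting identity as $\overline{R}\lotimes_R L\simeq M$ in the derived category of $\overline{R}$-modules, base change yields $V\simeq k\lotimes_{\overline{R}}(\overline{R}\lotimes_R L)\simeq k\lotimes_{\overline{R}}M$, so $H_i(V)=\Tor^{\overline{R}}_i(k,M)$ for every $i$.

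The next step is to observe that $V$ is a complex of $k$-vector spaces whose $R$-module structure factors through $R\twoheadrightarrow k$, and hence through $R\twoheadrightarrow\overline{R}$. Because every $k$-vector space is $k$-flat, writing $V=V\otimes_k k$ produces a quasi-isomorphism
$$
V\lotimes_R\overline{R} \;\simeq\; V\otimes_k(k\lotimes_R\overline{R}).
$$
Combining this with the earlier display and applying the K\"unneth formula for complexes of $k$-vector spaces yields
$$
\Tor^R_n(k,M)\;\cong\;\bigoplus_{i+j=n}\Tor^{\overline{R}}_i(k,M)\otimes_k\Tor^R_j(k,\overline{R})
$$
for every $n\in\bbz$; taking $k$-dimensions and collecting powers of $t$ delivers the Poincar\'e series identity~\eqref{eq20250926a}.

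The main obstacle will be to justify the base-change quasi-isomorphism $V\lotimes_R\overline{R}\simeq V\otimes_k(k\lotimes_R\overline{R})$ cleanly. This ultimately reduces to the fact that for any $R$-flat resolution $F_\bullet\to\overline{R}$, the entries of the double complex $V\otimes_R F_\bullet$ are $V_i\otimes_R F_j=V_i\otimes_k(k\otimes_R F_j)$, so its totalization is already $V\otimes_k(k\otimes_R F_\bullet)$. A concrete alternative that avoids derived-category manipulations is to choose $L$ and $F$ minimal---so that $L\otimes_R\overline{R}$ is a minimal $\overline{R}$-free resolution of $M$ and the entries of $d_L$ and $d_F$ lie in $\m_R$---and then to observe that $L\otimes_R F$ is a minimal $R$-free resolution of $M$ whose rank in degree $n$ is exactly $\sum_{i+j=n}\dim_k\Tor^{\overline{R}}_i(k,M)\cdot\dim_k\Tor^R_j(k,\overline{R})$; the subtlety here is to verify that $M$ admits a lifting whose reduction modulo $I$ is a minimal resolution, which follows from the standard minimization procedure for $R$-free complexes over a local ring (trivial direct summands of the form $R\xra{u}R$ with $u\notin\m_R$ split off).
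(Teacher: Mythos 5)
Your argument is correct and follows essentially the same strategy as the paper: both proofs rest on the resolution $L\otimes_RF$ of $M$ constructed in~\ref{lemma1} together with the multiplicativity of Poincar\'e series under derived tensor products. The only real difference is that the paper outsources that multiplicativity to two applications of \cite[Lemma (1.5.3)]{AF} --- once to $M\simeq L\otimes_R\overline{R}$ to get $P^R_L(t)=P^{\overline{R}}_M(t)$, and once to $M\simeq L\otimes_RF$ to get $P^R_M(t)=P^R_L(t)P^R_{\overline{R}}(t)$ --- whereas you reprove it inline: your base-change quasi-isomorphism $V\lotimes_R\overline{R}\simeq V\otimes_k\left(k\lotimes_R\overline{R}\right)$ followed by K\"unneth over the field $k$ is precisely the content of that cited lemma in the case at hand, and your justification of it (the entries $V_i\otimes_RF_j$ already factor through $k$) is sound.
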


\begin{proof}
Consider the setting of~\ref{lemma1}. Since $M\simeq L\otimes_R\overline{R}$ as $\overline{R}$-complexes, it follows from~\cite[Lemma (1.5.3)]{AF} that
\begin{equation}\label{eq20251106a}
P^{\overline{R}}_M (t) = P ^{R}_L(t) P^{\overline{R}}_{\overline{R}}(t)=P ^{R}_L(t). 
\end{equation}
Another application of~\cite[Lemma (1.5.3)]{AF} to the quasi-isomorphism $M\simeq L\otimes_RF$ of $R$-complexes and using the fact that $F$ is an $R$-free resolution of $\overline{R}$ implies that
\begin{equation}\label{eq20251106b}
P^{R}_M (t) = P ^{R}_L(t) P^{R}_{F}(t)=P ^{R}_L(t) P^{R}_{\overline{R}}(t). 
\end{equation}
The equality~\eqref{eq20250926a} now follows by combining~\eqref{eq20251106a} and~\eqref{eq20251106b}.
\end{proof}

%Here is our main result in this section.\vspace{2mm}

\noindent{\emph{Proof of Theorem~\ref{thm20251102b}.}}
Looking at the coefficients of $t$ on both sides of the equality~\eqref{eq20250926a} from Lemma~\ref{lemma2} we obtain the equality
$$
\nu_R(\fm_R)=\nu_{\overline{R}}(\m_{\overline{R}})+\nu_R(I)
$$
where $\nu$ denotes the minimal number of generators. This equality means that if $I$ is minimally generated by $x_1,\ldots,x_t$, then $\overline{x}_1,\ldots,\overline{x}_t\in \m_R/\m_R^2$ are linearly independent. Thus, $I$ is generated by a part of a minimal generating set of $\m_R$.
\qed\vspace{2mm}

Our next result is a consequence of Theorem~\ref{thm20251102b} which introduces some equivalent conditions for the liftability of $k$ along $\pi$. This result is particularly interesting because it includes a necessary and sufficient condition for the liftability of $k$ in terms of sections, while our Main Theorem involves such a condition in terms of retractions. We first remind the reader of the definitions of section and retraction. 

\begin{defn}
Let $f\colon R\to S$ and $g\colon S\to R$ be $k$-algebra homomorphisms such that $f$ is injective, $g$ is surjective, and $g\circ f=\id_R$. In this situation, $f$ is called a \emph{section} of $g$ and $g$ is called a \emph{retraction} of $f$.
\end{defn}

%For the following result, recall the notation from Convention~\ref{conv20251106a}.

\begin{cor}\label{cor20251031a}
Assume that $(R, \m_R)$ is a noetherian complete local $k$-algebra and that $I$ satisfies the equality $\m_R I =(0)$. Then, the following are equivalent: 
\begin{enumerate}[\rm(i)]
\item
The $\overline{R}$-module $k$ is liftable to $R$; 
\item
Every finitely generated $\overline{R}$-module is liftable to $R$;
\item
The natural surjection $\pi\colon R \twoheadrightarrow \overline{R}$ has a section;
\item
$R$ admits a semi-fiber product decomposition $R \cong \overline{R}\ltimes I \cong \overline{R} \ltimes_k (k\ltimes I)$. 
\end{enumerate}
\end{cor}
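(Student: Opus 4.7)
The plan is to prove the cyclic chain (i) $\Rightarrow$ (iv) $\Rightarrow$ (iii) $\Rightarrow$ (ii) $\Rightarrow$ (i). Among these, (ii) $\Rightarrow$ (i) is immediate since $k$ is a finitely generated $\overline{R}$-module, and (iv) $\Rightarrow$ (iii) follows from the observation that an isomorphism $R \cong \overline{R} \ltimes_k S$ realizes $\overline{R}$ as a $k$-subalgebra of $R$ whose inclusion is a section of $\pi$, because modding out by the ideal $\m_S \cong I$ recovers $\overline{R}$.

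For (iii) $\Rightarrow$ (ii), given a section $\sigma \colon \overline{R} \to R$ and any $\overline{R}$-free resolution $F$ of a finitely generated $\overline{R}$-module $M$, I plan to pick bases so that each differential $d_i^F$ is represented by a matrix $D_i$ over $\overline{R}$ and apply $\sigma$ entrywise to obtain matrices $\tilde{D}_i$ over $R$. Because $\sigma$ is a $k$-algebra homomorphism, the identities $\tilde{D}_{i-1}\tilde{D}_i = \sigma(D_{i-1}D_i) = 0$ hold, so these matrices define an $R$-free complex $L$; reducing entries modulo $I$ (i.e., applying $\pi$) recovers $F$, showing that $L$ is a lifting.

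The main work is (i) $\Rightarrow$ (iv). By Theorem~\ref{thm20251102b}, I can choose a minimal generating set $x_1, \ldots, x_t$ of $I$ that is part of a minimal generating set $x_1, \ldots, x_t, y_1, \ldots, y_s$ of $\m_R$. Since $\m_R I = 0$, the ideal $I$ is a $k$-vector space with basis $x_1, \ldots, x_t$. Setting $\mathfrak{a} = (y_1, \ldots, y_s)R$, the next step is to prove that $\m_R = \mathfrak{a} \oplus I$ as $k$-vector spaces. The inclusion $\m_R \subseteq \mathfrak{a} + I$ is immediate, so the main technical obstacle is the verification that $\mathfrak{a} \cap I = 0$. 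I plan to establish this by passing to $\m_R/\m_R^2$: any $z \in \mathfrak{a} \cap I$ admits expressions $z = \sum_j r_j y_j$ (writing $r_j = \lambda_j + m_j$ with $\lambda_j \in k$ and $m_j \in \m_R$) and $z = \sum_i c_i x_i$ with $c_i \in k$, and comparing their images in $\m_R/\m_R^2$---where $\overline{x}_1, \ldots, \overline{x}_t, \overline{y}_1, \ldots, \overline{y}_s$ are linearly independent---forces $\lambda_j = 0$ and $c_i = 0$ for all indices, hence $z = 0$. Once this decomposition is in hand, $\mathfrak{a}$ is multiplicatively closed (being an ideal) and $I$ is an ideal, so Proposition~\ref{pro20251105a} produces a semi-fiber product decomposition $R \cong (k \oplus \mathfrak{a}) \ltimes_k (k \oplus I)$. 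Restricting $\pi$ identifies $k \oplus \mathfrak{a}$ with $\overline{R}$, and $I^2 \subseteq \m_R I = 0$ identifies $k \oplus I$ with the trivial extension $k \ltimes I$; combining with Example~\ref{ex20251107a} then yields $R \cong \overline{R} \ltimes_k (k \ltimes I) \cong \overline{R} \ltimes I$, closing the cycle.
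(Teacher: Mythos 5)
Your proposal is correct, and the overall cycle (i)$\Rightarrow$(iv)$\Rightarrow$(iii)$\Rightarrow$(ii)$\Rightarrow$(i) matches the logical skeleton of the paper's proof; the difference lies in how the key implication (i)$\Rightarrow$(iv) is carried out. The paper invokes its Remark~\ref{disc20251102s}, which uses completeness to write a Cohen presentation $R=k[\![\underline{x},\underline{y}]\!]/J$ and then analyzes $J$ (showing $J=((\underline{x})^2,(\underline{x})(\underline{y}),\underline{f})$ with $\underline{f}\subseteq k[\![\underline{y}]\!]$) to exhibit $R\cong (R/I)\ltimes I$. You instead argue intrinsically: after Theorem~\ref{thm20251102b} supplies generators $x_1,\dots,x_t$ of $I$ extending to a minimal generating set of $\m_R$, you use $\m_RI=0$ to see that $I$ is the $k$-span of the $x_i$, verify $\fa\cap I=0$ by passing to $\m_R/\m_R^2$, and then feed the decomposition $\m_R=\fa\oplus I$ into Proposition~\ref{pro20251105a}. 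This is a clean and valid alternative; it buys you two things: it avoids the Cohen structure theorem entirely, and (as far as this implication is concerned) it does not actually use completeness of $R$, only that $R$ is noetherian local — so your argument slightly generalizes the paper's. One small caveat worth recording if you write this up: your copy of $\overline{R}$ inside $R$ is $k\oplus\fa$, and you correctly note that $\pi$ restricts to an isomorphism there because $(k\oplus\fa)\cap I=0$ and $k\oplus\fa\oplus I=R$; this identification is what makes the section in (iv)$\Rightarrow$(iii) literally a section of $\pi$ rather than of some abstractly isomorphic surjection. Your explicit matrix-lifting argument for (iii)$\Rightarrow$(ii) is the standard one that the paper leaves as ``trivial'' and is fine.
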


Before proving this result, we note the following fact.

\begin{para}\label{disc20251102s}
Let $(R,\fm_R)$ be a noetherian complete local $k$-algebra and $I$ be an ideal of $R$ such that $\fm_RI=(0)$. It is straightforward to check that if $I$ is generated by a part of a minimal generating set of $\fm_R$, then $R\cong (R/I)\ltimes I $ as $k$-algebras. In fact, in this setting, we may assume that $R=k[\![\underline{x},\underline{y}]\!]/J$, where $\underline{x} \cup \underline{y}$ is a finite set of variables, $J$ is an ideal of $k[\![\underline{x},\underline{y}]\!]$ with $J\subseteq (\underline{x},\underline{y})^2$, and $I$ is the ideal of $R$ generated by $\underline{x}$ modulo $J$.
By our assumption, $J$ contains the ideals $(\underline{x})^2$ and $(\underline{x})(\underline{y})$ and one can take a sequence $\underline{f}$ of elements in $k[\![\underline{y}]\!]$ such that 
$J=((\underline{x})^2,(\underline{x})(\underline{y}), \underline{f}).$
Thus, $R/I\cong k[\![\underline{y}]\!]/(\underline{f})$ and 
therefore, $(R/I) \ltimes I \cong R$.
\end{para}

\noindent \emph{Proof of Corollary~\ref{cor20251031a}.}
The implications (iv)$\implies$(iii)$\implies$(ii)$\implies$(i) are trivial.

(i)$\implies$(iv): If the $\overline{R}$-module $k$ is liftable to $R$, then it follows from Theorem~\ref{thm20251102b} that $I$ is generated by a part of a minimal generating set of $\m_R$. Since $\m_R I =(0)$, as we discussed in~\ref{disc20251102s}, we get the isomorphism $R\cong \overline{R}\ltimes I$. Moreover, it follows from Example~\ref{ex20251107a} that $R \cong \overline{R} \ltimes_k (k\ltimes I)$. \qed\vspace{2mm}

We conclude this section with the following remark.

\begin{para}\label{disc20251102r}
Let $(R,\fm_R)$ be a (complete) local $k$-algebra. Assuming that the $\overline{R}$-module $k$ is liftable to $R$, the homomorphism $\pi$ does not necessarily have a section. For example, the $k$-algebra homomorphism $R = k[x, y]/(xy) \twoheadrightarrow \overline{R} = R/(x-y)$ from Example~\ref{ex20251102a} does not admit a section. Thus, the additional assumption that $\fm_RI=(0)$ is necessary in Corollary~\ref{cor20251031a}. In light of this corollary, a general question to ask is: what additional conditions are necessary for $\pi$ to have a section?
\end{para}

\section{Proof of Main Theorem}\label{sec20251109b}

In this section, we prove our Main Theorem and discuss some of its consequences after making the necessary preparation. We start by providing another characterization of semi-fiber products in terms of the retractions of $k$-algebra homomorphisms. This characterization is used in the proof of Main Theorem.

\begin{thm}\label{retraction theorem}
For $k$-algebras $(R, \m_R)\subseteq (A, \m_A)$ the following hold: 
\begin{enumerate}[\rm(a)]
\item 
The inclusion $R \subseteq A$ has a retraction if and only if there is a $k$-subalgebra $(S, \m_S)$ of $A$ such that $A$ admits a semi-fiber product decomposition       
$A\cong R \ltimes _k S$. 
\item
Assume that $R \subseteq A$ has a retraction. Then, $S \subseteq A$ obtained in part (a) also has a retraction if and only if $A \cong R \times _k S$.
\end{enumerate}
\end{thm}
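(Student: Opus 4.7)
The plan is to treat both directions of each part, with the content concentrated in the forward direction of part (a) where I must extract a semi-fiber complement from the retraction data.

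For the forward direction of (a), given a retraction $\rho\colon A\to R$, I would set $\m_S:=\ker\rho$ and $S:=k\oplus\m_S$. The first step is to verify $\m_S\subseteq\m_A$: writing $a\in A$ as $a=\ell\oplus y$ with $\ell\in k$ and $y\in\m_A$, the equation $\rho(a)=0$ combined with $\rho(\ell)=\ell$ and $\rho(y)\in\m_R$ forces $\ell=0$, so $(S,\m_S)$ is a $k$-subalgebra of $(A,\m_A)$. Next, the identity $x=\rho(x)+(x-\rho(x))$ for $x\in\m_A$, together with the fact that $\rho|_R=\id_R$ (which gives $\m_R\cap\m_S=0$), yields the direct sum $\m_A=\m_R\oplus\m_S$. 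Defining the $R$-action on $\m_S$ via multiplication in $A$, the computations $\rho(ry)=\rho(r)\rho(y)=0$ and $\rho(yy')=0$ show closure and endow $\m_S$ with the $(R,S)$-bimodule structure required by~\ref{assumption}. Finally, the multiplication in $A$, read through the decomposition $A=k\oplus\m_R\oplus\m_S$, matches formula~\eqref{eq20250909a} term by term, yielding $A\cong R\ltimes_k S$. The reverse direction is immediate: whenever $A\cong R\ltimes_k S$, formula~\eqref{eq20250909a} shows that the projection $r\oplus y\mapsto r$ is a $k$-algebra retraction of $R\subseteq A$.

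For part (b), keeping the notation above, I would prove that $S\subseteq A$ admits a retraction if and only if $A\cong R\times_k S$. Given a retraction $\sigma\colon A\to S$, I would form the $k$-algebra homomorphism $\psi\colon A\to R\times_k S$, $a\mapsto(\rho(a),\sigma(a))$; the output lies in the fiber product since $\rho$ and $\sigma$ both induce the identity modulo maximal ideals. Injectivity follows from $\rho(a)=0\implies a\in\m_S$ combined with $\sigma|_S=\id_S$. For surjectivity, given $(r,s)\in R\times_k S$ with common residue $\ell\in k$, the element $a:=r+(s-\sigma(r))$ is the witness: because $\sigma(r)$ also has residue $\ell$, we have $s-\sigma(r)\in\m_S$, and the relations $\rho|_{\m_S}=0$ and $\sigma|_S=\id_S$ give $\rho(a)=r$ and $\sigma(a)=s$. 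Conversely, if $A\cong R\times_k S$ with $R$ and $S$ as the natural factors, the projection onto the second coordinate restricts to $\id_S$ and provides the desired retraction of $S\subseteq A$.

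The main delicate point is the forward direction of (a): one must confirm that the multiplication on $A$, viewed through the decomposition $k\oplus\m_R\oplus\m_S$, genuinely reproduces formula~\eqref{eq20250909a} with the $R$-action on $\m_S$ being the correct bimodule structure of~\ref{assumption}. Once this has been verified, the reverse direction of (a) and both directions of (b) reduce to straightforward manipulations of the resulting direct sum decompositions and the defining property of the fiber product.
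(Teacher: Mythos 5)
Your proof is correct and follows essentially the same route as the paper: in part (a) you take $\m_S=\ker\rho$ with $S=k\oplus\m_S$ and the $R$-action given by multiplication in $A$, merely filling in the details the paper calls ``straightforward.'' In part (b) you verify the isomorphism $a\mapsto(\rho(a),\sigma(a))$ directly instead of invoking Proposition~\ref{induced by f}, but this is exactly the map $\ell\oplus x\oplus y\mapsto(\ell\oplus x,\,\ell\oplus(y+f(x)))$ that the paper obtains, so the two arguments coincide in substance.
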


\begin{proof}
(a) Assume that $\pi\colon A\to R$ is the retraction of the inclusion $R \subseteq A$. Setting $S = k \oplus \m_S$, where $\m_S = \Ker (\pi)$, and noting that the $R$-action $\ast$ on $\fm_S$ is given by the multiplication of the elements of $R$ and $\fm_S$ in $A$, it is straightforward to see that  $A \cong R \ltimes _k S$. The converse of part (a) is obvious.

(b) Let $\pi'\colon A\to S$ be a retraction of the inclusion $S\subseteq A$. By part (a), we can identify $A$ by $R\ltimes_k S$. Let $f\colon R \to S$ be the composition of the $k$-algebra homomorphism $\pi'$ and the inclusion map $R \subseteq A$ and note that the $R$-module structure on $\m_S$ is given via $f$. Therefore, it follows from Proposition~\ref{induced by f} that $R \ltimes _k S\cong R \times _k S$, as desired.
\end{proof}

The proof of Main Theorem will be given after further preparation that is included in the following two lemmas.

\begin{lem}\label{Lemma3}
Let $\varphi\colon (T, \m_T)\to (A, \m_A)$  be a $k$-algebra homomorphism. Assume that $T$ is complete and $A$ is separated, that is, $\bigcap _{n=1}^\infty \m_A^n =(0)$. If the equality $\m_TA=\m_A$ holds, then $\varphi$ is surjective. 
\end{lem}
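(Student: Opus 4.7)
The goal is to show $\varphi(T)=A$, and since $A=k\oplus\fm_A$ with $\varphi$ a $k$-algebra map (so $k\subseteq\varphi(T)$ automatically), the plan reduces to showing that every $a\in\fm_A$ lies in $\varphi(T)$. The strategy is the standard ``complete Nakayama'' style approximation: build $a$ as the image of a Cauchy series in $T$, convert it to an honest element using completeness of $T$, and then use separatedness of $A$ to identify the image with $a$.

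First I would observe that the hypothesis $\fm_TA=\fm_A$ iterates to give $\fm_T^nA=\fm_A^n$ for every $n\geq 1$: it says exactly that $\fm_A$ is the ideal of $A$ generated by $\varphi(\fm_T)$, and taking $n$-th powers of this ideal gives the claim. Fix $a\in\fm_A$. I would then construct inductively a sequence $s_n\in\fm_T^n$ such that
\[
a-\varphi(s_1+s_2+\cdots+s_n)\in\fm_A^{n+1}\qquad\text{for all }n\geq 1.
\]
For the base case, write $a=\sum_i\varphi(t_i)a_i$ as a finite sum with $t_i\in\fm_T$ and $a_i\in A$, split each $a_i=\lambda_i+b_i$ with $\lambda_i\in k$ and $b_i\in\fm_A$ using $A=k\oplus\fm_A$, and set $s_1=\sum_i\lambda_it_i\in\fm_T$. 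Then $a-\varphi(s_1)=\sum_i\varphi(t_i)b_i\in\fm_TA\cdot\fm_A=\fm_A^2$. The inductive step applies the identical trick to the remainder $a-\varphi(s_1+\cdots+s_n)\in\fm_A^{n+1}=\fm_T^{n+1}A$.

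Because $T$ is complete and $s_n\in\fm_T^n$, the series $s:=\sum_{n=1}^\infty s_n$ converges to an element of $\fm_T$. For every $N$, the tail satisfies $s-(s_1+\cdots+s_N)\in\fm_T^{N+1}$, so $\varphi(s)-\varphi(s_1+\cdots+s_N)\in\varphi(\fm_T^{N+1})\subseteq\fm_A^{N+1}$. Combining this with the construction gives
\[
\varphi(s)-a\;=\;\bigl(\varphi(s)-\varphi(s_1+\cdots+s_N)\bigr)-\bigl(a-\varphi(s_1+\cdots+s_N)\bigr)\;\in\;\fm_A^{N+1}
\]
for every $N$, and the hypothesis $\bigcap_{N\geq 1}\fm_A^N=(0)$ forces $\varphi(s)=a$.

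The argument has no truly hard step; the only point that requires a moment of care is that $A$ is assumed only separated, not complete, so one cannot form the sum $\sum\varphi(s_n)$ directly inside $A$. The plan circumvents this by summing in the complete ring $T$ first and then applying $\varphi$, exploiting the continuity $\varphi(\fm_T^n)\subseteq\fm_A^n$; separatedness of $A$ is then precisely what is needed to promote ``agreement modulo every power of $\fm_A$'' into equality.
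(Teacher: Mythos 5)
Your proof is correct and follows essentially the same approach as the paper: both split off the $k$-component of $A=k\oplus\fm_A$ to extract at each stage an element of $\fm_T^n$, sum the resulting Cauchy series in the complete ring $T$, and use separatedness of $A$ to identify $\varphi(s)$ with the target element. The only cosmetic difference is that you track the remainder via the identity $\fm_T^{n}A=\fm_A^{n}$, while the paper carries it explicitly in the form $\sum_i\varphi(b_{ni})x_{ni}$ with $b_{ni}\in\fm_T^{n}$.
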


\begin{proof}
Since  $A=k\oplus \m_A$, it follows from our assumption that
$$
\m_A = \m_T A= \varphi(\m_T) + \varphi(\m_T)\m_A.
$$
Therefore, for an element $x\in \fm_A$, there are a positive integer $r_1$ and elements $a_1, b_{1i}  \in \m_T$ and $x_{1i} \in \m_A$ for $1\leq i\leq r_1$ such that
$$
x = \varphi (a_1) +\sum _{i=1}^{r_1} \varphi(b_{1i})x_{1i}.
$$
By induction on $n$, we obtain an equality 
$$
x= \varphi (a_1+a_2+ \cdots +a_n) + \sum _{i=1} ^{r_n} \varphi (b_{ni}) x_{ni}
$$
in which $r_n$ is a positive integer, $a_j \in \m_T^j$ for all $1\leq j \leq n$ , $b_{ni} \in \m_T^n$, and  $x_{ni} \in \m_A$ for all $1 \leq i \leq r_n$. 
Since $T$ is complete, $\alpha = \sum _{i=1} ^{\infty} a_i$ is an element in $T$. 
Also, it follows from the equality
$$
x - \varphi (\alpha) = \varphi \left(\sum _{i=n+1} ^{\infty} a_i\right) + \sum _{i=1} ^{r_n} \varphi (b_{ni}) x_{ni}
$$
that $x -\varphi(\alpha) \in \m_A^n$ for all integers $n \geq 1$. 
Hence, by the fact that $A$ is separated we obtain the equality $x = \varphi (\alpha)$ in $A$, which means that $\varphi$ is surjective. 
\end{proof}

%The following lemma is standard and can be found in the literature. However, we give a short proof for the convenience of the reader.

\begin{lem}\label{Lemma 4.}
Assume that $(R, \m_R)$ is a noetherian local $k$-algebra and $(T, \m_T)$ is a $k$-subalgebra of $(R, \m_R)$ such that $R$ is $T$-flat, and let $F$ be a complex of finitely generated free $R$-modules. If $\HH_n(F \otimes _T k)=0$ for an integer $n$, then $\HH_n(F)=0$.
\end{lem}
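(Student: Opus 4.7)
The plan is to filter $F$ by the powers of $\fm_T$ and combine the resulting long exact sequences in homology with Krull's intersection theorem. The crucial ingredient from the hypotheses is that each $F_i$ is $T$-flat, which follows because $F_i$ is a free $R$-module and $R$ is $T$-flat.

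For every integer $k\geq 1$, I would consider the short exact sequence of $R$-complexes
\[
0 \to \fm_T^k F \to \fm_T^{k-1} F \to Q_k \to 0,
\]
where $Q_k := \fm_T^{k-1}F/\fm_T^kF$. The $T$-flatness of each $F_i$ makes this exact in every degree and gives $\fm_T^jF_i \cong \fm_T^j\otimes_T F_i$, hence
\[
Q_k \;\cong\; (\fm_T^{k-1}/\fm_T^k)\otimes_T F \;\cong\; (\fm_T^{k-1}/\fm_T^k)\otimes_k (F\otimes_T k)
\]
as complexes of $k$-vector spaces, using that $\fm_T^{k-1}/\fm_T^k$ is a $k$-vector space. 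Since tensoring with a $k$-vector space is exact, the hypothesis $\HH_n(F\otimes_T k)=0$ forces $\HH_n(Q_k)=0$, and the long exact sequence in homology yields a surjection $\HH_n(\fm_T^k F)\twoheadrightarrow \HH_n(\fm_T^{k-1} F)$. Composing over $k$, I obtain a surjection $\HH_n(\fm_T^k F)\twoheadrightarrow \HH_n(F)$ for every $k\geq 1$.

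Writing $Z_n$ and $B_n$ for the $n$-cycles and $n$-boundaries of $F$, the image of this surjection inside $\HH_n(F)=Z_n/B_n$ is $(\fm_T^k F_n \cap Z_n + B_n)/B_n$, so surjectivity forces $Z_n \subseteq \fm_T^k F_n + B_n$. Therefore
\[
\HH_n(F) \;\subseteq\; (\fm_T^k F_n + B_n)/B_n \;=\; (\fm_T R)^k\cdot (F_n/B_n) \quad \text{for every } k\geq 1.
\]
Since $F_n$ is a finitely generated $R$-module, so is $F_n/B_n$. The convention on $k$-algebra homomorphisms gives $\fm_T \subseteq \fm_R$, so $\fm_T R$ sits in the Jacobson radical of the noetherian local ring $R$. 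Krull's intersection theorem then yields $\bigcap_k (\fm_T R)^k(F_n/B_n)=0$, and hence $\HH_n(F)=0$.

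The one delicate step is the exactness of the filtration sequence and the resulting identification of $Q_k$; both rest squarely on the $T$-flatness of each $F_i$, which is immediate from the hypothesis that $R$ is flat over $T$. Everything else reduces to standard manipulations with the long exact sequence, iteration on $k$, and Krull's intersection theorem.
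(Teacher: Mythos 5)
Your proof is correct and follows essentially the same route as the paper's: both filter $F$ $\m_T$-adically, use the $T$-flatness of the free $R$-modules $F_i$ to identify the successive graded pieces with vector-space multiples of $F\otimes_T k$, deduce $Z_n(F)\subseteq B_n(F)+\m_T^iF_n$ for all $i$, and finish with Krull's intersection theorem over the noetherian local ring $R$. The only cosmetic difference is that you run the argument on the subcomplexes $\m_T^kF$ (via surjectivity of the induced maps on $\HH_n$), whereas the paper works with the quotient complexes $F\otimes_T(T/\m_T^i)$ and an induction showing their $n$-th homology vanishes.
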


\begin{proof}
The proof is given in the following steps.\vspace{2mm}

\noindent \textbf{Step 1}: For all integers $i\geq 1$ we have $\HH_n\left(F \otimes _T \left(T/\m_T ^{i}\right)\right)= 0$. This statement is proved by induction on $i$. In fact, for an integer $i >1$, since $F$ is a $T$-flat complex, the short exact sequence 
$$
0 \to \m_T^{i -1} / \m_T^{i} \to T/\m_T^{i} \to T/\m_T^{i-1} \to 0
$$ 
of $T$-modules induces a short exact sequence 
$$
0 \to F \otimes _T  \left(\m_T^{i -1} / \m_T^{i}\right) \to F \otimes _T\left(T/\m_T^{i}\right) \to F \otimes _T \left(T/\m_T^{i-1}\right) \to 0
$$ 
of $T$-complexes. Note that the complex $F \otimes _T  \left(\m_T^{i -1} / \m_T^{i}\right)$ is a direct sum of $F \otimes _T k$ and therefore, $\HH_n\left(F \otimes _T  \left(\m_T^{i -1} / \m_T^{i}\right)\right)=0$. 
On the other hand, by inductive step we have $\HH_n\left(F \otimes _T\left(T/\m_T^{i-1}\right)\right) =0$. Hence, $\HH_n\left(F \otimes _T\left(T/\m_T^{i}\right)\right)=0$.\vspace{2mm}

\noindent \textbf{Step 2}: Denote by $B_n(F)$ and $Z_n(F)$ the $n$-th boundary set and the $n$-th cycle set of $F$, respectively. Then, for all integers $i\geq 1$ we have the containment $Z_n (F) \subseteq B_n (F) + \m_T^{i} F_n$. To prove this, note that for all integers $i\geq 1$ we have 
\begin{equation}\label{eq20251102a}
Z_n \left(F \otimes _T\left(T/\m_T^{i}\right)\right)=B_n\left(F \otimes _T\left(T/\m_T^{i}\right)\right) = \left( B_n(F) + \m_T^{i}F_n \right)  /\m_T^{i}F_n
\end{equation}
in which the first equality follows from Step 1. Now, let $z \in Z_n(F)$. Then, we have $z \otimes _T \left(T/\m_T^{i}\right)\in Z_n(F)\otimes_T\left(T/\m_T^{i}\right)\subseteq Z_n \left(F \otimes _T \left(T/\m_T^{i}\right)\right)$. Therefore, by~\eqref{eq20251102a} we conclude that $z \in  B_n(F) + \m_T^{i}F_n$.\vspace{2mm}

\noindent \textbf{Step 3}: We now prove that $\HH_n(F)=0$. For this, note that by Step 2 we have
$$
Z_n (F) \subseteq \bigcap _{i \geq 1} \left( B_n (F) + \m_T^{i}F_n \right) 
\subseteq \bigcap _{i \geq 1} \left( B_n (F) + \m_R^{i}F_n \right) 
= B_n(F)  
$$
in which the last equality holds since $B_n(F) \subseteq F_n$, where $F_n$ is a free module over the noetherian local ring $(R, \m_R)$. 
Therefore, we have  $Z_n(F)=B_n(F)$ and hence, the equality $\HH_n(F)=0$ holds. 
\end{proof}

\noindent \emph{Proof of Main Theorem.} The equivalence (ii) $\Longleftrightarrow$ (iii) has been proved in part (a) of Theorem~\ref{retraction theorem}.

(i)$\implies$(ii): Assuming that the $\overline{R}$-module $k$ is liftable to $R$, there exists an $R$-free complex $L$ such that $L \otimes _R \overline{R}$ is an $\overline{R}$-free resolution of $k$. Thus, it follows from the isomorphism $L \otimes _T k\cong L \otimes _R \overline{R}$ that $\HH_n(L \otimes _T k)=0$ for all integers $n\geq 1$. Therefore, by Lemma~\ref{Lemma 4.} we have $\HH_n(L)=0$ for all integers $n\geq 1$. Setting $\HH_0(L) = R/J$ for an ideal $J$ of $R$, we have that $L$ is an $R$-free (hence, $T$-flat) resolution of $R/J$. Since $R/J$ is a lifting module of $k$, we have $R/J\otimes_R\overline{R}\cong k$ and hence, $\m_{R/J}$ can be identified by $\m_T (R/J)$. By Lemma~\ref{Lemma3}, the composition $T \xra{\varphi} R \twoheadrightarrow R/J$ is surjective. On the other hand, since $L \otimes _T k$ is acyclic, we have $\Tor _i^T(R/J, k) =0$ for all integers $i \geq 1$. Therefore,  $R/J$ is $T$-flat and we conclude that $T \xra{\varphi} R \twoheadrightarrow R/J$ is an isomorphism, meaning that $\varphi$ has a retraction. 

(ii)$\implies$(i): Let $\pi\colon R\to T$ be a retraction of $\varphi$. Then, $T$ is an $R$-module via $\pi$ and we can consider an $R$-free resolution
$F=\cdots \to F_1 \to F_0 \to 0$ of $T$. Since each $F_i$ is $T$-flat, $F \otimes _R \overline{R} \cong F \otimes _T k$ is an $\overline{R}$-free resolution of $T\otimes_R \overline{R}\cong k$. Thus, $F$ is a lifting of the $\overline{R}$-module $k$ to $R$, as desired.
\qed\vspace{2mm}

The following result is an immediate consequence of Main Theorem.

\begin{cor}\label{cor20251018a}
Assume that $(R, \m_R)$ is a noetherian complete local $k$-algebra, and let $\overline{R} = R/\underline{x}R$, where $\underline{x} \in \m_R$ is a regular sequence in $R$. Then, the following statements are equivalent: 
\begin{enumerate}[\rm(i)]
\item
The $\overline{R}$-module $k$ is liftable to $R$; 
\item
The inclusion $k[\![\underline{x}]\!] \subseteq R$ has a retraction;
\item
There is a $k$-subalgebra $(S, \m_S)$ of $(R, \m_R)$ such that $R$ admits a semi-fiber product decomposition $R \cong k[\![\underline{x}]\!] \ltimes _k S$.
\end{enumerate}
More precisely, under these equivalent conditions, $S=k\oplus \fm_S$, where $\fm_S$ is the kernel of the retraction of the inclusion $k[\![\underline{x}]\!] \subseteq R$.
\end{cor}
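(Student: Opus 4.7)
The plan is to deduce the corollary directly from the Main Theorem, applied in the special case where $T = k[\![\underline{x}]\!]$ with $\fm_T = (\underline{x})$ and $\varphi\colon T \to R$ is the natural inclusion. The first task is therefore to verify that this $\varphi$ satisfies all the hypotheses of the Main Theorem. Since $R$ is complete in the $\fm_R$-adic topology and $\underline{x} \subseteq \fm_R$, every formal power series in $\underline{x}$ converges in $R$, so $\varphi$ is a well-defined $k$-algebra homomorphism with $\varphi(\fm_T) \subseteq \fm_R$. By construction, $R/\varphi(\fm_T)R = R/\underline{x}R = \overline{R}$ is the fiber ring of $\varphi$, and $T$ is a complete $k$-algebra.

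The key remaining hypothesis is flatness of $\varphi$, which is where the regularity of $\underline{x}$ enters. The standard way I would argue this is via the Koszul complex: $K^{k[\![\underline{x}]\!]}(\underline{x})$ is a free resolution of $k$ over $T$, and its base change
\[
K^{k[\![\underline{x}]\!]}(\underline{x}) \otimes_{T} R \;\cong\; K^R(\underline{x})
\]
has vanishing homology in positive degrees because $\underline{x}$ is $R$-regular. Hence $\Tor_i^{T}(k,R) = 0$ for all $i \geq 1$, and the local criterion for flatness yields that $\varphi$ is flat.

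With these verifications in place, the Main Theorem immediately produces the equivalence of (i), (ii), and (iii) in the corollary: the statement about liftability of the $\overline{R}$-module $k$ is unchanged; condition (ii) of the Main Theorem translates to ``the inclusion $k[\![\underline{x}]\!] \subseteq R$ has a retraction''; and condition (iii) becomes the existence of a $k$-subalgebra $(S,\fm_S)$ with $R \cong k[\![\underline{x}]\!] \ltimes_k S$. For the final clause describing $S$, I would invoke the construction in part (a) of Theorem~\ref{retraction theorem}: given a retraction $\pi\colon R \to k[\![\underline{x}]\!]$ of $\varphi$, the semi-fiber product decomposition is obtained by taking $\fm_S = \ker(\pi)$ and $S = k \oplus \fm_S$, which is exactly the description asserted.

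I do not expect a substantial obstacle in this argument; the corollary is essentially a direct specialization of the Main Theorem, and the only nontrivial input beyond invoking that theorem is the flatness of $k[\![\underline{x}]\!] \to R$, which is a routine Koszul-complex computation once the regularity of $\underline{x}$ is in hand.
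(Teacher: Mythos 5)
Your proposal is correct and follows essentially the same route as the paper: both reduce the corollary to the Main Theorem (together with the construction in Theorem~\ref{retraction theorem}(a) for the final clause), the only input being flatness of $k[\![\underline{x}]\!]\subseteq R$. The paper simply cites \cite[Exercise 18.18]{Eisenbud} for that flatness, whereas you prove it via the Koszul complex and the local criterion for flatness; both are valid.
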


\begin{proof}
Since $\underline{x}$ is a regular sequence in $R$, the inclusion map $k[\![\underline{x}]\!]\subseteq R$ is flat; see for instance~\cite[Exercise 18.18]{Eisenbud}. Therefore, the assertion follows from the proof of Theorem~\ref{retraction theorem} (a) and Main Theorem.
\end{proof}

\begin{para}
Consider the setting of Corollary~\ref{cor20251018a}. According to our discussion in~\ref{para20251104a}, if $\Ext^2_{\overline{R}}(k,k)=0$, i.e., $\overline{R}$ is a regular ring of Krull dimension at most $1$, then the equivalent conditions of Corollary~\ref{cor20251018a} hold.
\end{para}

We conclude this paper with the following result which is a variation of Corollary~\ref{cor20251018a}. Note that in this result, unlike Corollary~\ref{cor20251018a}, we are working with a non-regular element.

\begin{cor}\label{cor20250928a}
Assume that $(R, \m_R)$ is a noetherian local $k$-algebra (not necessarily complete). Let $n$ be a positive integer, $x$ be an element in $\fm_R$ such that $(0:x) = x^nR\neq (0)$, and $\overline{R}= R/xR$. Then, the following are equivalent: 
\begin{enumerate}[\rm(i)]
\item
The $\overline{R}$-module $k$ is liftable to $R$; 
\item
Considering the $k$-subalgebra $(k[x], (x))$ of $(R, \m_R)$, the inclusion $k[x] \subseteq R$ has a retraction;
\item
There is a $k$-subalgebra $(S, \m_S)$ of $(R, \m_R)$ such that $R$ admits a semi-fiber product decomposition $R \cong k[x] \ltimes _k S$.
\end{enumerate}
More precisely, under these equivalent conditions, $S=k\oplus \fm_S$, where $\fm_S$ is the kernel of the retraction of the inclusion $k[x] \subseteq R$.
\end{cor}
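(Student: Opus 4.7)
The plan is to realize Corollary~\ref{cor20250928a} as a direct instance of Main Theorem with $T=k[x]\subseteq R$. The first task is to identify $T$ abstractly: since $x^n\in(0:x)$ we have $x^{n+1}=0$, while $x^n\neq 0$ since $x^nR=(0:x)\neq 0$. Thus the natural $k$-algebra surjection $k[X]/(X^{n+1})\twoheadrightarrow k[x]$ sending $X\mapsto x$ is an isomorphism (its kernel, being an ideal of $k[X]/(X^{n+1})$, must be $(X^j)$ for some $j$, and $x^n\neq 0$ forces $j\geq n+1$). Being a finite-dimensional local $k$-algebra with $\fm_T^{n+1}=0$, $T$ is automatically complete in the $\fm_T$-adic topology and satisfies the completeness hypothesis of Main Theorem.

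The heart of the argument will be verifying that $\varphi\colon T\hookrightarrow R$ is flat. I will prove by induction on $i$ the chain of annihilator identities
\[
(0:x^i)=x^{n+1-i}R\qquad(1\leq i\leq n+1),
\]
with the base case given by the hypothesis. For the inductive step, a given $y\in(0:x^{i+1})$ satisfies $(yx)x^i=0$, so the inductive hypothesis yields $z\in R$ with $yx=zx^{n+1-i}=(zx^{n-i})x$; then $y-zx^{n-i}\in(0:x)=x^nR\subseteq x^{n-i}R$ forces $y\in x^{n-i}R$. Combined with the eventually $2$-periodic minimal $T$-free resolution of $T/(X^i)$, whose differentials alternate between multiplication by $X^i$ and $X^{n+1-i}$, these identities give $\Tor^T_j(T/(X^i),R)=0$ for all $1\leq i\leq n$ and $j\geq 1$; since every ideal of $T$ has this form, $R$ is $T$-flat.

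With $T$ complete and $\varphi$ flat, the fiber ring of $\varphi$ is $R/\fm_TR=R/xR=\overline{R}$, so Main Theorem applies verbatim and yields the equivalence of (i), (ii), and (iii). The final description $S=k\oplus\fm_S$ with $\fm_S$ equal to the kernel of the retraction of $k[x]\subseteq R$ is read directly from the proof of Theorem~\ref{retraction theorem}(a) that is reused inside the proof of Main Theorem.

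I expect the flatness step to be the main obstacle. Unlike in Corollary~\ref{cor20251018a}, where flatness of $k[\![\underline{x}]\!]\subseteq R$ follows immediately from the regularity of $\underline{x}$, here $x$ is genuinely a zero divisor, and one has to propagate the single annihilator identity at $i=1$ to all higher powers of $x$. Once this bootstrap is in place, the remainder is a mechanical appeal to Main Theorem.
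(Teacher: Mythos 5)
Your proposal is correct and follows essentially the same route as the paper: identify $k[x]$ with the complete Artinian algebra $k[X]/(X^{n+1})$, verify that $R$ is flat over it using the eventually $2$-periodic resolutions and the annihilator hypothesis, and then invoke Main Theorem together with the proof of Theorem~\ref{retraction theorem}(a). The only difference is in the flatness verification: the paper tensors the periodic resolution of $k$ with $R$ and deduces flatness from $\Tor^T_i(R,k)=0$, whereas you run the induction $(0:x^i)=x^{n+1-i}R$ and apply the ideal criterion --- your induction in fact supplies the identity $(0:x^n)=xR$ that the paper's one-line exactness claim tacitly requires, so nothing is missing.
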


\begin{proof}
Let $T=k[X]/(X^{n+1})$ and consider the $k$-algebra homomorphism $T\to R$ obtained by composing the maps $T\xra{\cong} k[x]$ and $k[x]\hookrightarrow R$. Note that the sequence
\begin{equation}\label{eq20251107a}
\cdots\xra{X} T\xra{X^n} T\xra{X} T\to k\to 0
\end{equation}
is exact. Thus, the assumption $(0:x) = x^nR$ implies that the sequence $$\cdots\xra{x} R\xra{x^n} R\xra{x} R\to \overline{R}\to 0$$ obtained by applying the functor $R\otimes_T-$ on the exact sequence~\eqref{eq20251107a} is also exact. Hence, $\Tor^T_i(R,k)=0$ for all integers $i\geq 1$, that is, $R$ is $T$-flat. Now, the assertion follows from the proof of Theorem~\ref{retraction theorem} (a) and Main Theorem.
\end{proof}

%We conclude this paper with the following remark.

%%%%%%%%%%%%%%%%%%
%%%%%%%%%%%%%%%%%%
%%%%%%%%%%%%%%%%%%

%\appendix
%\section{}\label{appendixB}

%\section*{Acknowledgments}

%\bibliography{../+new}
\providecommand{\bysame}{\leavevmode\hbox to3em{\hrulefill}\thinspace}
\providecommand{\MR}{\relax\ifhmode\unskip\space\fi MR }
% \MRhref is called by the amsart/book/proc definition of \MR.
\providecommand{\MRhref}[2]{%
  \href{http://www.ams.org/mathscinet-getitem?mr=#1}{#2}
}
\providecommand{\href}[2]{#2}

\end{document}